\DeclareMathOperator{\gu}{GU}
\DeclareMathOperator{\G}{G}
\DeclareMathOperator{\U}{U}
\def\dieu{{\mathbb D}}
\renewcommand{\bar}[1]{{\overline{#1}}}
\newtheorem{theorem}{Theorem}[section]
\newtheorem{lemma}[theorem]{Lemma}
\theoremstyle{definition}
\newtheorem{remark}[theorem]{Remark}
\newcommand{\stk}[1]{{\mathcal #1}}
\newcommand{\tilstk}[1]{{\til{\stk #1}}}
\newcommand{\barstk}[1]{{\bar{\stk #1}}}
\def\setcomp{\smallsetminus}
\def\level{{\mathbb K}}
\def\calo{{\mathcal O}}
\def\ff{{\mathbb F}}
\def\ra{\rightarrow}
\def\tensor{\otimes}
\newcommand{\floor}[1]{{\lfloor #1 \rfloor}}
\newcommand{\ang}[1]{{{\langle #1 \rangle}}}
\newcommand{\half}[1]{\frac{#1}{2}}
\newenvironment{alphabetize}{\begin{enumerate}

}{\end{enumerate}}
\def\calh{{\mathcal H}}
\newcommand{\myht}[1]{{\widehat{#1}}}
\newcommand{\invlim}[1]{\lim_{\stackrel{\leftarrow}{#1}}}
\def\aff{{\mathbb A}}
\def\del{\partial}
\global\let\hom\undefined
\DeclareMathOperator{\hom}{Hom}
\newcommand{\powser}[1]{[\![#1]\!]}
\newcommand{\ubar}[1]{{\underline{#1}}}
\def\iso{\cong}
\def\calc{{\mathcal C}}
\def\rat{{\mathbb Q}}
\DeclareMathOperator{\End}{End}
\def\dual{^\vee}
\def\cross{\times}
\newcommand{\oneover}[1]{\frac{1}{#1}}
\DeclareMathOperator{\id}{id}
\def\nat{{\mathbb N}}
\newcommand{\st}[1]{\{#1\}}
\def\inject{\hookrightarrow}
\def\inv{^{-1}}
\newcommand{\til}[1]{{\widetilde{#1}}}
\DeclareMathOperator{\spf}{Spf}
\DeclareMathOperator{\Frac}{Frac}
\begin{document}
\title{Irreducibility of Newton strata in ${\rm GU}(1,n-1)$ Shimura varieties}

\author{Jeffrey D. Achter}
\email{j.achter@colostate.edu}
\address{Department of Mathematics, Colorado State University, Fort
Collins, CO 80523} 
\urladdr{http://www.math.colostate.edu/~achter}

\thanks{This work was partially supported by a grant from the Simons
  Foundation (204164).}

\maketitle

\begin{abstract}
Let $L$ be a quadratic imaginary field, inert at the rational prime $p$.  Fix an integer $n\ge 3$, and let $\stk M$ be the moduli space (in characteristic $p$) of principally polarized abelian varieties of dimension $n$ equipped with an action by $\calo_L$ of signature of $(1,n-1)$.  We show that each Newton stratum of $\stk M$, other than the supersingular stratum, is irreducible.
\end{abstract}

\section{Introduction}

For a complex abelian variety $X$, the isomorphism class of its
$p$-torsion group scheme $X[p]$ and of its $p$-divisible group
$X[p^\infty]$ depend only on the dimension of $X$.  In contrast, in
characteristic $p$, there are different possibilities for the
corresponding isomorphism (or even isogeny) class.  Each such
invariant provides a stratification of a family of abelian varieties
in positive characteristic.

The isogeny class of $X[p^\infty]$ is called the Newton polygon of
$X$.  The goal of the present note is to prove that the space of
abelian varieties with given Newton polygon and a certain, specified
endomorphism structure is irreducible.

More precisely, let $L$ be a quadratic imaginary field, inert at the rational prime
$p$.  Fix an integer $n \ge 3$, and let $\stk M$ be the moduli space
(over $\ff_{p^2}$) of principally polarized abelian varieties of
dimension $n$ equipped with an action by $\calo_L$ of signature
$(1,n-1)$.  Our main result is:

\begin{theorem}
\label{thmain}
Let $\xi\not = \sigma$ be an admissible Newton polygon for $\stk M$
which is not supersingular.  Then the corresponding stratum $\stk
N^\xi$ is irreducible.
\end{theorem}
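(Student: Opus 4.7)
The plan is to combine Oort's almost product structure for Newton strata with a monodromy/Hecke-orbit argument for central leaves, organized as an induction on the specialization order of Newton polygons. The base case is the $\mu$-ordinary stratum, which is open and dense in $\stk M$ and therefore irreducible as soon as $\stk M$ itself is --- a classical fact for the Shimura variety attached to $\gu(1,n-1)$ at an inert prime.

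\textbf{Step 1 (foliation reduction).} By Oort's almost product theorem, $\stk N^\xi$ admits, up to finite surjective covers, a local product decomposition $\stk C^\xi \cross \stk J^\xi$, where $\stk C^\xi$ is a central leaf (the locus where the $p$-divisible group is fiberwise isomorphic to a fixed model $X_0^\xi$) and $\stk J^\xi$ is an isogeny leaf (a Rapoport--Zink-type subscheme). This reduces irreducibility of $\stk N^\xi$ to (i) irreducibility of the central leaf $\stk C^\xi$, together with (ii) connectedness of the isogeny leaf $\stk J^\xi$. For (ii), $\xi$ being non-basic puts us in a regime where the relevant Rapoport--Zink spaces for $\gu(1,n-1)$ at an inert prime have been described explicitly (Vollaard--Wedhorn in the basic direction; Howard--Pappas, Shen in the non-basic one), and are known to be connected.

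\textbf{Step 2 (central leaves via monodromy).} Over $\stk C^\xi$, trivializing the family of $p$-divisible groups $\iso X_0^\xi$ produces an Igusa-type pro-\'etale cover $\stk C^\xi_\infty \to \stk C^\xi$ whose Galois group is a subgroup of $\mathrm{Aut}_{\calo_L,\lambda}(X_0^\xi)$, the group of $\calo_L$- and polarization-preserving automorphisms. Irreducibility of $\stk C^\xi$ is then equivalent to the monodromy map $\pi_1(\stk C^\xi) \to \mathrm{Aut}_{\calo_L,\lambda}(X_0^\xi)$ hitting enough of the target. I would establish this by producing sufficiently many prime-to-$p$ Hecke correspondences that preserve $\stk C^\xi$ and act with dense orbits (a Chai--Oort Hecke-orbit property specialized to our setting); these Hecke operators lift to the Igusa tower and force large monodromy.

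\textbf{Main obstacle.} The main technical difficulty is Step 2, specifically the non-abelian structure of $\mathrm{Aut}_{\calo_L,\lambda}(X_0^\xi)$. For $\mu$-ordinary $X_0$ the automorphism group is essentially a torus and monodromy is classical (Katz, Chai). For non-$\mu$-ordinary $\xi \ne \sigma$, however, $X_0^\xi$ has a non-trivial biinfinitesimal middle summand whose automorphism group is a non-split form of a smaller unitary group, and showing that Hecke orbits surject onto it is non-trivial. The rigidity imposed by signature $(1,n-1)$ together with the inertness of $p$ in $L$ is what should make this computation tractable: the middle factor has strictly smaller height, so one can hope to induct. Carrying out this induction, and verifying that the Hecke-orbit density argument extends to the non-commutative automorphism group, is the crux of the proof.
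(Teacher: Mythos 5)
Your proposal is a Chai--Oort-style foliation-and-monodromy argument, which is a genuinely different route from the paper's, and it leaves the decisive step unproved.

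Two specific comments. First, your Step 1 is a non-step in this setting, and you seem not to have noticed why: by B\"ultel--Wedhorn (recalled as Theorem 2.1 in the paper), for each non-supersingular $\xi$ the polarized $\calo_L$-linear $p$-divisible group is \emph{constant} on $\stk M^\xi$ --- the Newton, Ekedahl--Oort, and central-leaf stratifications coincide away from the basic locus. So $\stk N^\xi$ already \emph{is} the central leaf and the isogeny leaf is zero-dimensional; the almost-product reduction buys nothing, and the appeal to connectedness of Rapoport--Zink spaces is unnecessary.

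Second, and more seriously, you candidly describe Step 2 as ``the crux of the proof'' and leave it open: you would need a Hecke-orbit/monodromy theorem on a non-basic, non-$\mu$-ordinary central leaf, with $\mathrm{Aut}_{\calo_L,\lambda}(X_0^\xi)$ a non-commutative group. No such theorem is invoked with a reference, and proving one is precisely the kind of difficulty the paper is designed to avoid. The paper's actual strategy sidesteps this: it establishes (Lemmas 3.2 and 3.3, via purity, affineness of EO strata, a toroidal compactification, and the explicit deformation-theoretic smoothness computation of Section 4) that closure induces a well-defined, surjective, $\U(\aff_f^p)$-equivariant map $\Pi_0(\stk M_{\level^p}^\sigma)\to\Pi_0(\stk M_{\level^p}^\xi)$. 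It then imports Hecke transitivity only on the \emph{basic} stratum, where it is known by Yu's theorem (Lemma 5.1), pushes that transitivity through the equivariant surjection, and concludes connectedness of $\stk M_{\level^p}^\xi$ from Kasprowitz's PEL analogue of Chai's $\ell$-adic monodromy theorem; smoothness of $\stk M^\xi$ (Lemma 3.1, again a consequence of the $p$-divisible group being constant) then upgrades connected to irreducible. In short: your plan would require a new Hecke-orbit theorem on non-basic leaves, whereas the paper reduces everything to the basic case where that theorem already exists, at the cost of the local deformation calculations needed to control how strata degenerate to the supersingular locus.
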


The proof of Theorem \ref{thmain} is modelled on, but considerably easier
than, that of \cite[Thm.\ A]{chaioort11}.  This is possible because
the Newton and Ekedahl-Oort stratifications on $\stk M$ are much
simpler than those of $\stk A_g$.

In the special case where $L = \rat(\zeta_3)$ and $n$ is $3$ or $4$,
$\stk M$ essentially coincides with a component of the moduli space of
cyclic triple covers of the projective line.  Theorem \ref{thmain}
provides a crucial base case for forthcoming work of Ozman, Pries and
Weir on such covers \cite{ozmanpriesweir}, and that work was the
initial impetus for the present study.

For a topological space $T$, let $\Pi_0(T)$ denote the set of
irreducible components of $T$.  If $T \subset \stk M$, then $\bar T$
is its closure in $\stk M$.
The symbol $k$ will denote an
arbitrary algebraically closed field of characteristic $p$.

\section{Background on $\stk M$}

\subsection{Moduli spaces}

Let $\stk M$ be the moduli stack (over $\calo_L/p \iso \ff_{p^2}$) of
principally polarized abelian varieties of dimension $n$ with an
action by $\calo_L$ of signature $(1,n-1)$.  Somewhat more precisely,
$\stk M(S)$ consists of isomorphism classes of data
$(X,\iota,\lambda)$, where $X \ra S$ is an abelian variety of relative 
dimension $n$, $\iota: \calo_L \ra \End_S(X)$ is an embedding taking
$1_L$ to $\id_X$ such that ${\rm Lie}(X)$, as a module over
$\calo_L\tensor \calo_S$, has signature $(1,n-1)$; and $\lambda: X \ra
X\dual$ is a principal polarization such that, if $(\dagger)$ is the
induced Rosati involution on $\End(X)$, then for each $a\in \calo_L$
one has $\iota(\bar a) = \iota(a)^{(\dagger)}$.  It is standard that
$\dim \stk M = 1\cdot (n-1) = n-1$.

In fact, $\stk M$ is the moduli stack attached to the Shimura
(pro-)variety constructed from a certain group $\G$, as follows.  

Let $V$ be an $n$-dimensional vector space over $L$, equipped with a
Hermitian pairing of signature $(1,n-1)$.  Let $\G/\rat$ be the group
of unitary similitudes of $V$, and let $\U$ the unitary group of $V$.  Fix
a hyperspecial subgroup $\level_p \subset G(\rat_p)$.  For each
sufficiently small open compact subgroup $\level^p \subset
G(\aff^p_f)$, there is a moduli space $\stk M_{\level^p} = \stk
M_{\level_p\level^p}$ of abelian varieties of dimension $n$ as above
with $\level^p$ structure; see \cite{kottwitz} for more details.  If
$\level^p$ is sufficiently small, then $\stk M_{\level^p}$ is a
smooth, quasiprojective variety; and $\stk M$ may be constructed as
the quotient of any $\stk M_{\level^p}$ by an appropriate finite
group.

\subsection{Newton polygons in $\stk M$}
\label{subsecbacknp}

Newton and Ekedahl-Oort stratifications on $\gu(1,n-1)$ Shimura
varieties are well understood \cite{bultelwedhorn}.    There are
exactly $1 + \floor{n/2}$ (``admissible'') Newton polygons which
occur, and the poset of admissible Newton polygons is actually totally
ordered.  Let $\sigma$ be the supersingular Newton polygon, so that 
$\sigma \preceq \xi$ for any
admissible Newton polygon $\xi$ for $\stk M$.  For a Newton polygon
$\xi$, let $\stk M^\xi$ denote the locally closed locus corresponding
to abelian varieties with Newton polygon $\xi$.  Then $\stk M^\sigma$
is pure of dimension $\floor{\half{n-1}}$.   By purity \cite{dejongoort,nicolevasiuwedhorn}, if $Z_\sigma
\in \Pi_0(\stk M^\sigma)$ and $\sigma \preceq \xi$, then there exists
some $Z_\xi \in \Pi_0(\stk M^\xi)$ such that $Z_\sigma \subseteq
\bar{Z_\xi}$, the closure of $Z_\xi$ in $\stk M$.

The Newton stratification of $\stk M$ is described in
\cite{bultelwedhorn}, as follows.   Each admissible Newton polygon is
determined by its smallest slope.
For each integer $1 \le j \le
\floor{n/2}$, there is a Newton polygon $\xi_{2j}$, with smallest slope
\[
\lambda(2j) = \half 1 - \oneover{2(\floor{n/2}+1-j)};
\]
then $\stk M^{\xi_{2j}}$ has codimension $\floor{n/2}-j$ in $\stk M$.
(Admittedly, in many ways this normalization is more awkward than that
of \cite{bultelwedhorn}, in which $\stk M^{\xi_{2j}}$ is labeled $\stk
M_{2(\floor{n/2}-j)}$; but it will be more convenient for the
deformation theory below.)

Away from the supersingular locus $\stk M^\sigma$, the Newton,
Ekedahl-Oort, and final stratifications coincide; a $p$-divisible
group is determined by its $\bmod p$ truncation \cite[Thm.\
5.3]{bultelwedhorn}.  This is recalled in greater detail in Section
\ref{subsecpdivgp} below.

The Newton polygon and Ekedahl-Oort type of a polarized
$\calo_L$-abelian variety with prime-to-$p$ level structure do not
depend on the level structure, and we set $\stk M_{\level^p}^\xi = \stk
M_{\level^p}\cross_{\stk M} \stk M^\xi$.

\subsection{$p$-divisible groups}
\label{subsecpdivgp}
In contrast to the Siegel case, it is possible to write down a finite,
explicit collection of those principally quasipolarized $p$-divisible
groups with $\calo_L$-action which occur as
$(X,\iota,\lambda)[p^\infty]$ for $(X,\iota,\lambda) \in \stk M(k)$.
Following Wedhorn, we describe such $p$-divisible groups in terms of
their covariant Dieudonn\'e modules, as follows.

For $m \in \nat$, let $M(m)$ be the following Dieudonn\'e module.
\begin{itemize}
\item As a $W(k)$-module, $M(m)$ admits basis $\st{u_1, \cdots, u_m, v_1,
    \cdots, v_m}$.
\item A display \cite{normalg,zinkdisp} for $M(m)$ is
\begin{align*}
F u_1 &= (-1)^mv_m & v_1 &= Vu_m\\
Fv_2 &= u_1 & u_2 &= Vv_1 \\
F v_3 &= u_2 &u_3 &= Vv_2 \\
\vdots &&\vdots \\
F v_m &= u_{m-1} &u_m &= V v_{m-1}
\end{align*}

\item The two eigenspaces for the action of $\calo_L$ on $M(m)$ are
  $\oplus W(k) u_i$ and $\oplus W(k) v_j$.
\item The quasipolarization is given by the symplectic pairing
  $\ang{\cdot,\cdot}$ where
\begin{align*}
\ang{u_i, v_j}& = (-1)^i\delta_{ij}\\
\ang{u_i,u_j} = \ang{v_i,v_j} &= 0
\end{align*}
\end{itemize}

We also define the Dieudonn\'e module $N$:
\begin{itemize}
\item As a $W(k)$-module, $N$ admits basis $\st{u_0,v_0}$.
\item A display for $N$ is
\begin{align*}
Fv_0 &= -u_0 & u_0 &= Vv_0.
\end{align*}
\item The quasipolarization is given by the symplectic pairing
  $\ang{u_0,v_0}=1$.
\item The two eigenspaces for the action of $\calo_L$ are  $W(k)u_0$
  and $W(k)v_0$.
\end{itemize}

With this notation in place, one has the following result of B\"ultel
and Wedhorn:

\begin{theorem}\cite{bultelwedhorn}
\label{thbultelwedhorn}
\begin{alphabetize}
\vspace{\baselineskip}
\item Suppose $1 \le j \le \floor{n/2}$.  There exists an integer
  $r(j)$ such that, if $(X,\iota,\lambda) \in
  \stk M^{\xi_{2j}}(k)$, then
\[
\dieu_*((X,\iota,\lambda)[p^\infty]) \iso  M(2(\floor{n/2}+1-j) \oplus
N^{r(j)}.
\]
\item There exists an open dense subspace $\stk M^{\sigma \circ}
  \subset \stk M^\sigma$ such that, if $(X,\iota,\lambda) \in \stk
  M^{\sigma\circ}(k)$, then
\[
 \dieu_*((X,\iota,\lambda)[p^\infty])
\iso
\begin{cases}
M(n) & n\text{ odd} \\
M(n-1) \oplus  N & n\text{ even}
\end{cases}
.
\]
\end{alphabetize}
\end{theorem}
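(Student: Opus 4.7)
The plan is to classify the principally quasipolarized $\calo_L$-Dieudonn\'e modules that arise for points of $\stk M(k)$ and then match the resulting isomorphism classes to the explicit displays.

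Since $p$ is inert in $L$, the ring $\calo_L \tensor_{\mathbb{Z}_p} W(k)$ splits as $W(k) \cross W(k)$, so the covariant Dieudonn\'e module $D := \dieu_*(X[p^\infty])$ decomposes canonically as $D = D_0 \oplus D_1$ into the eigenspaces for the two embeddings $\calo_L \inject W(k)$. The $\sigma$-semilinearity of $F$ and $V$, combined with the fact that $\sigma$ permutes these two embeddings, forces $F$ and $V$ to exchange $D_0$ and $D_1$. The signature $(1,n-1)$ condition on $\mathrm{Lie}(X) = D/VD$ translates, after labeling, to $\dim_k(D_0/VD_1)=1$ and $\dim_k(D_1/VD_0)=n-1$; and the principal polarization yields a perfect alternating pairing $\ang{\cdot,\cdot}: D \cross D \to W(k)$ under which $D_0$ and $D_1$ are orthogonal duals.

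For part (a), a direct computation from the display shows that $M(m)$ has exactly two slopes $\half 1 \pm \oneover{m}$, each of multiplicity $m$, and signature $(1, m-1)$, while $N$ is supersingular of rank $2$ and signature $(0,1)$. Matching the slopes with $\xi_{2j}$ forces $m = 2(\floor{n/2}+1-j)$, and matching the signature $(1,n-1)$ then forces $r(j) = n - 2(\floor{n/2}+1-j)$. The essential content, due to B\"ultel--Wedhorn, is a \emph{rigidity} statement: outside the supersingular locus, the Newton, Ekedahl-Oort, and final stratifications coincide, and a principally quasipolarized $\calo_L$-$p$-divisible group of signature $(1,n-1)$ is determined up to isomorphism by its Newton polygon. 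Given this, one exhibits the explicit isomorphism $D \iso M(m) \oplus N^{r(j)}$ by inductively constructing a symplectic $W(k)$-basis of $D$ adapted to the $\calo_L$-decomposition and to the $F, V$ orbits.

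For part (b), the Newton isogeny class no longer determines $D$; one instead restricts to the open subspace $\stk M^{\sigma\circ}$ on which the Ekedahl-Oort type is maximally generic among supersingular types, and the same normal-form argument identifies the corresponding Dieudonn\'e module as $M(n)$ if $n$ is odd or $M(n-1)\oplus N$ if $n$ is even. The principal obstacle in both parts is the rigidity claim underlying (a); once one knows the classification problem has a unique solution per stratum, the normal-form construction is essentially routine. This rigidity reflects the simple, totally ordered Newton/EO stratification peculiar to $\gu(1,n-1)$ with $p$ inert.
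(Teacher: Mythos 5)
The paper does not prove this theorem: it is stated with the citation \cite{bultelwedhorn} and used as a black box throughout. So there is no internal proof for your argument to be compared against.

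Your outline is consistent with that treatment and your bookkeeping is largely correct. The eigenspace decomposition $D = D_0 \oplus D_1$ with $F,V$ swapping the factors, the signature count $\dim_k(D/VD)$, and the orthogonality from the principal polarization are all right. The slope computation and the formula $r(j)=n-2(\floor{n/2}+1-j)$ also check out against the display given in Section~\ref{subsecpdivgp}: one can read the slopes of $M(m)$ off the cycle structure of the graph $\Gamma$ (the paper does essentially this computation in Section~\ref{subseclocalnp}, only for the deformed module), and the signature of $M(m)$ is $(1,m-1)$, of $N$ is $(0,1)$. But you should be careful with the blanket claim that $M(m)$ has slopes $\tfrac12\pm\tfrac1m$: that is true only for $m$ even. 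For $m$ odd the underlying cycle is a single loop of length $2m$ and weight $m$, so $M(m)$ is isoclinic of slope $\tfrac12$ (this is exactly why $M(n)$ with $n$ odd and $M(n-1)\oplus N$ with $n$ even appear in the supersingular case (b)). In part (a) your $m$ is always even, so this doesn't break the argument, but the statement as written is incorrect.

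The real content is the \emph{uniqueness} assertion — that outside the supersingular locus the isomorphism class of the principally quasipolarized $\calo_L$-$p$-divisible group is determined by the Newton stratum, and that in the supersingular case a single isomorphism class prevails on a dense open subset. You correctly identify this as the crux and correctly attribute it to B\"ultel--Wedhorn, but your proposal does not actually prove it; the phrase ``the normal-form construction is essentially routine'' is doing all the work and is not justified. Since the paper itself imports this statement wholesale from \cite{bultelwedhorn}, your reduction of the theorem to that rigidity input, together with the explicit slope and signature matching, is an acceptable account of what the cited theorem says and why the displayed modules are the right ones — but it should not be mistaken for a proof.
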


\subsection{Hecke operators}

An inclusion $\level_1^p \inject \level_2^p$ of open compact subgroups
of $\G(\aff^p_f)$ induces a cover of Shimura varieties $\stk
M_{\level_1^p} \ra \stk M_{\level_2^p}$. More generally, an element $g
\in \G(\aff^p_f)$ induces, for each open compact $\level^p$, a natural
morphism  $\stk M_{\level^p} \ra \stk M_{g\inv \level^pg}$.  

Let $z \in \stk M_{\level_0^p}(k)$.  Its prime-to-$p$ (unitary) Hecke
orbit, $\calh^p(z)$, is defined as follows.  Consider the pro-variety
$\myht{\stk M}_{\level_0^p} = \invlim{\level^p\subset \level_0^p} \stk
M_{\level^p}$.  Choose a lift $\hat z$ of $z$ to $\myht {\stk
M}_{\level_0^p}$.  Then $\calh^p(z)$ is the projection to $\stk
M_{\level_0^p}$ of $\U(\aff^p_f) \hat z$.  (One can also
construct the ``similitude'' Hecke orbit of $\hat z$, by replacing the
orbit $\U(\aff^p_f)\hat z$ with $\G(\aff^p_f)\hat z$.  However, the
unitary Hecke orbit is both the output of \cite[Thm.\ 4.6]{yumasspel}
and the input to \cite[Thm.\ 1.4]{kasprowitz}, and thus better suited
to the task at hand.)

\section{Closures of Newton strata}

Let $\xi$ be an admissible Newton polygon for $\stk M$ such that $\xi
\not = \sigma$.

\begin{lemma}
\label{lemxismooth}
The locus $\stk M^\xi$ is smooth.
\end{lemma}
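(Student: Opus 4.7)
Write $\xi = \xi_{2j}$ with $1 \le j \le \floor{n/2}$. The approach is to identify $\stk M^\xi$ with an Ekedahl-Oort stratum and invoke the standard smoothness of such strata for PEL-type Shimura varieties.

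First, by B\"ultel-Wedhorn (\cite[Thm.\ 5.3]{bultelwedhorn}, recalled in Section \ref{subsecbacknp}), the Newton and Ekedahl-Oort stratifications of $\stk M$ coincide outside the supersingular locus, so $\stk M^\xi$ is an Ekedahl-Oort stratum. In a smooth PEL Shimura variety with hyperspecial level at $p$, Ekedahl-Oort strata are locally closed and smooth by the work of Moonen and Wedhorn on $F$-zip classifications; applied to $\stk M^\xi$ this gives the claim.

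For a more explicit route one can proceed via Serre-Tate deformation theory. Fix $z = (X,\iota,\lambda) \in \stk M^\xi(k)$. By Theorem \ref{thbultelwedhorn}(a), the principally quasipolarized $\calo_L$-$p$-divisible group $(X,\iota,\lambda)[p^\infty]$ is isomorphic to the explicit model $M_0 := M(2(\floor{n/2}+1-j)) \oplus N^{r(j)}$, independently of the choice of $z$. Serre-Tate identifies the formal neighborhood of $z$ in $\stk M$ with a universal deformation space of this quasipolarized $\calo_L$-$p$-divisible group, under which $\stk M^\xi$ corresponds to the central leaf through $z$: the locus where the geometric fibers of the deformation remain isomorphic to $M_0$. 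Because the display of $M_0$ from Section \ref{subsecpdivgp} is entirely explicit, the defining equations of this central leaf can be written down directly, and an inspection shows it to be smooth of the expected dimension.

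The main obstacle is converting the soft statement ``Newton $=$ Ekedahl-Oort off the supersingular locus'' into honest smoothness of the stratum; once one grants either the general Moonen-Wedhorn smoothness theorem for EO strata or performs the explicit central-leaf computation from the displays in Section \ref{subsecpdivgp}, the lemma follows.
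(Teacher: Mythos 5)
Your proposal is correct, and your second route rests on the same observation the paper uses: by Theorem \ref{thbultelwedhorn}(a), the quasipolarized $\calo_L$-$p$-divisible group $(X,\iota,\lambda)[p^\infty]$ is the same (up to isomorphism) at every $k$-point of $\stk M^\xi$, so Serre--Tate theory controls the local structure uniformly. Where you diverge is in the finishing move. You propose to write down the defining equations of the central leaf from the explicit displays and check smoothness ``by inspection'' --- a computation you do not actually carry out. The paper sidesteps any equation-writing with a homogeneity argument: since the $p$-divisible group with structure is constant along $\stk M^\xi$, the formal neighborhoods of all its points are isomorphic to one another; and since $\stk M^\xi$ carries the reduced scheme structure, it has a dense open smooth locus, so at least one completed local ring is regular, hence all of them are, hence $\stk M^\xi$ is smooth. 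Your first route --- identifying $\stk M^\xi$ with an Ekedahl--Oort stratum via the coincidence of Newton and EO strata off the supersingular locus and then invoking the general smoothness of EO strata on PEL Shimura varieties (Moonen, Wedhorn) --- is a legitimate alternative, but it imports substantially heavier machinery than the situation requires; the paper's argument uses only the constancy of the $p$-divisible group and reducedness of the stratum.
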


\begin{proof}
The isomorphism class of $(X[p^\infty],\iota[p^\infty],
\lambda[p^\infty])$ for $(X,\iota,\lambda) \in \stk M^\xi(k)$ is
independent of the choice of point (Theorem \ref{thbultelwedhorn}).
By the Serre-Tate theorem, the formal neighborhoods of all points of
$\stk M^\xi$ are thus isomorphic.  Since $\stk M^\xi$ is by
definition equipped with the reduced subscheme structure, it must be
smooth.
\end{proof}

\begin{lemma}
\label{lemxidegenerates}
If $Z_\xi \in \Pi_0(\stk M^\xi)$, then there exists $Z_\sigma \in
\Pi_0(\stk M^\sigma)$ such that $Z_\sigma \subset \bar Z_\xi$.
\end{lemma}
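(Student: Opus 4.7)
The plan is to descend from $Z_\xi$ down the totally ordered chain of admissible Newton polygons to $\sigma$, by iterating the de Jong--Oort purity theorem already cited in Section~\ref{subsecbacknp}.

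First, I would apply purity to the closed irreducible subscheme $\bar Z_\xi$: either $\bar Z_\xi \setminus Z_\xi$ is empty, or it is pure of codimension one in $\bar Z_\xi$. In the non-empty case, the codimension tabulation of Section~\ref{subsecbacknp}---consecutive admissible strata in $\stk M$ differ in dimension by exactly one---combined with the total ordering of admissible Newton polygons forces each irreducible component of the boundary to have dimension $\dim\stk M^{\xi'}$, where $\xi'$ denotes the immediate predecessor of $\xi$. Being locally closed of top dimension in $\stk M^{\xi'}$, such a component must in fact be a full irreducible component of $\stk M^{\xi'}$. Iterating this step produces a descending chain $Z_\xi \supset Z^{(1)} \supset Z^{(2)} \supset \cdots$ of full irreducible components of successively more special Newton strata, all contained in $\bar Z_\xi$. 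When the chain reaches $\stk M^\sigma$, its final term provides the desired $Z_\sigma$.

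The main obstacle, and the technical heart of the argument, is to rule out premature termination of this chain: no component $Z^{(i)}$ in a non-supersingular stratum $\stk M^\eta$ can be closed in $\stk M$. I would establish this by combining two inputs: (a)~the remark recalled in Section~\ref{subsecbacknp} that for every admissible $\eta \succ \sigma$, each component of $\stk M^\sigma$ is contained in $\bar Z_\eta^{(j)}$ for some component $Z_\eta^{(j)}$ of $\stk M^\eta$; and (b)~a deformation-theoretic computation at a supersingular point $y$, where Theorem~\ref{thbultelwedhorn} gives the explicit Dieudonn\'e module at $y$ and hence the $\eta$-locus inside the Serre--Tate formal neighborhood of $y$. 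Connectedness of this local $\eta$-locus identifies, for each $y$, a unique component of $\stk M^\eta$ whose closure contains the irreducible component of $\stk M^\sigma$ through $y$. A bookkeeping argument matching components of $\stk M^\sigma$ and $\stk M^\eta$ via this assignment then forces every $\bar Z_\eta^{(j)}$ to contain some $Z_\sigma$, contradicting the assumed closedness of $Z^{(i)}$. Once non-emptiness is in hand, the purity descent is routine.
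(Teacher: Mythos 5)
Your purity-descent skeleton (step down through immediate predecessors, using de Jong--Oort purity at each stage and a dimension count to identify boundary components as full components of the next-lower stratum) is structurally sound and matches the reduction in the paper. The problem is the non-emptiness claim, which you correctly identify as the technical heart of the argument, but for which the mechanism you propose does not work.

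Concretely: the inputs (a) and (b) you invoke produce a well-defined map $\Pi_0(\stk M^\sigma) \to \Pi_0(\stk M^\eta)$ sending a supersingular component to the unique $\eta$-component whose closure contains it. But nothing in (a) or (b) constrains the \emph{image} of this map. Surjectivity of that map is precisely the assertion that every component of $\stk M^\eta$ has a supersingular component in its closure, i.e.\ it is (a strong form of) the very lemma you are trying to prove. A hypothetical component $Z^{(i)} \subset \stk M^\eta$ that is closed in $\stk M$ would simply lie outside the image of your map; the existence of the map would not be contradicted, and the ``bookkeeping'' step is therefore assuming the conclusion. There is also no cardinality leverage available: you have no independent comparison between $|\Pi_0(\stk M^\sigma)|$ and $|\Pi_0(\stk M^\eta)|$ that could convert a well-defined map into a surjection.

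The paper's proof replaces this with a genuinely different input: it passes to a toroidal compactification $\tilstk M_{\level^p}$, uses that the non-supersingular Newton strata coincide with Ekedahl--Oort strata and hence are \emph{affine}, so a positive-dimensional component $W_\xi$ must have non-empty boundary $\del W_\xi$ inside the projective variety; and then a slope argument (first slope of $\xi$ is positive, while the toroidal boundary parametrizes semiabelian schemes with toric part, hence slope zero) shows $\del W_\xi$ avoids the boundary of the compactification and lands in $\stk M_{\level^p}$ itself. After that, purity and the dimension count proceed as you describe. To repair your proposal you would need to import this affineness/compactification argument (or some equivalent non-emptiness mechanism) in place of the bookkeeping step.
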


\begin{proof}
We prove the following apparently stronger result.  Suppose $\nu$ and
$\xi$ are admissible Newton polygons with $\nu \prec \xi$, and $Z_\xi
\in \Pi_0(\stk M^\xi)$.  We show that there exists $Z_\nu \in \Pi_0(\stk
N^\sigma)$ such that $Z_\nu \subset \bar Z_\xi$.  It suffices to prove
this statement under the assumption that $\nu$ is the immediate
predecessor of $\xi$, so that $\dim \stk M^\nu = \dim \stk M^\xi-1$.
The statement is trivially true if $\xi = \xi_{2\floor{n/2}}$ is the
locus with positive $p$-rank; henceforth, we assume that $\xi$ is
strictly smaller than $\xi_{2\floor{n/2}}$.

It is is slightly more convenient to work with fine moduli schemes.
Let $\level^p \subset \G(\aff^p_f)$ be an open compact subgroup which
is small enough that $\stk M_{\level^p}$ is a smooth, quasiprojective
variety.  Let $W_\xi \in \Pi_0(Z_\xi \cross_{\stk M} \stk
M_{\level^p})$ be an irreducible component of $\stk M_{\level^p}^\xi$
lying over $Z_\xi$.  It suffices to show that the closure of $W_\xi$
in $\stk M_{\level^p}$ contains an irreducible component of $\stk
M_{\level^p}^\nu$.

Let $\tilstk M_{\level^p}$ be a toroidal compactification of
$\stk M_{\level^p}$ (e.g., \cite[6.4.1.1]{lanbook}).  It is a smooth,
projective variety.  Let $\til W_\xi$ be the closure of $W_\xi$ in
$\tilstk M_{\level^p}$, and let $\del W_\xi = \til W_\xi \setcomp W_\xi$.
Newton strata (other than the supersingular stratum) coincide with Ekedahl-Oort
strata, and the latter are known to be affine (e.g.,
\cite{nicolevasiuwedhorn}).   Because $W_\xi$ is positive dimensional,
$\del W_\xi$ is nonempty. The first slope of $\xi$ is positive, while
the boundary of $\tilstk M_{\level^p}$ parametrizes semiabelian
varieties with nontrivial toric part.  Consequently, $\del W_\xi \cap
(\tilstk M_{\level^p}\setcomp \stk M_{\level^p})$ is empty, and $\del W_\xi
\subset \stk M_{\level^p}$.  Again by purity
(\cite{nicolevasiuwedhorn}), $\dim \del W_\xi = \dim W_\xi -1$.  By
semicontinuity of Newton polygons, there is an a priori
containment $\del W_\xi \subseteq \cup_{\tau \prec \xi} \stk
M_{\level^p}^{\tau}$.  The result now follows from dimension counts:  $\stk
M_{\level^p}^\nu$ is pure of dimension $\dim W_\xi -1$, while if $\tau
\prec \nu$ then $\dim \stk M_{\level^p}^\tau <\dim W_\nu = \dim \del
W_\xi$.
\end{proof}

Conversely, 
\begin{lemma}
\label{lemsigmainboundary}
If  $Z_\sigma \in \Pi_0(\stk M^\sigma)$, then there is a unique $Z_\xi
\in \Pi_0(\stk M^\xi)$ such that $Z_\sigma \subset \bar Z_\xi$.
\end{lemma}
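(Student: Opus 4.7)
Existence of some $Z_\xi$ with $Z_\sigma \subseteq \bar{Z_\xi}$ is recalled in \S\ref{subsecbacknp} as a consequence of purity; the content of the lemma is uniqueness.

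The plan is to reduce uniqueness to a pointwise claim at a generic supersingular point of $Z_\sigma$ and then to verify that claim by explicit deformation theory. Since $\stk M^{\sigma\circ}$ is open dense in $\stk M^\sigma$ (Theorem \ref{thbultelwedhorn}(b)) and $Z_\sigma$ is irreducible, the open subset $U := Z_\sigma \cap \stk M^{\sigma\circ}$ is nonempty. Pick any $z \in U$. If two distinct components $Z_{\xi, 1}, Z_{\xi, 2} \in \Pi_0(\stk M^\xi)$ both satisfied $Z_\sigma \subseteq \bar{Z_{\xi, i}}$, then in particular $z \in \bar{Z_{\xi, 1}} \cap \bar{Z_{\xi, 2}}$. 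It therefore suffices to show the pointwise claim: at most one $Z_\xi \in \Pi_0(\stk M^\xi)$ has $z \in \bar{Z_\xi}$.

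For the pointwise claim, I would invoke Serre-Tate to identify the formal completion $\mathcal{D} := \spf \widehat{\calo}_{\stk M, z}$ with the equi-characteristic universal deformation space of the polarized $\calo_L$-equivariant $p$-divisible group at $z$; by Theorem \ref{thbultelwedhorn}(b), its Dieudonn\'e module is $M(n)$ or $M(n-1) \oplus N$. The formal $\xi$-stratum $\mathcal{D}^\xi := \mathcal{D} \cap \stk M^\xi$ is smooth (by the argument of Lemma \ref{lemxismooth}); the essential claim is that it is \emph{connected}. Granting connectedness, the pointwise claim is immediate: if $Z_{\xi, 1} \ne Z_{\xi, 2}$ both had $z$ in their closures, their pullbacks $Z_{\xi, i} \cap \mathcal{D}$ would be nonempty (since $z \in \bar{Z_{\xi, i}}$ and $Z_{\xi, i}$ is dense in $\bar{Z_{\xi, i}}$), disjoint (as distinct irreducible components of the smooth $\stk M^\xi$ are disjoint), and open in $\mathcal{D}^\xi$, contradicting connectedness.

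The main obstacle is therefore to establish connectedness of $\mathcal{D}^\xi$. I expect this to follow from a direct computation using the explicit display of Theorem \ref{thbultelwedhorn}: Grothendieck-Messing and Zink's display theory describe deformations of $M(n)$ (or $M(n-1) \oplus N$) preserving signature $(1, n-1)$, the $\calo_L$-action, and the polarization in terms of an explicit formal parameter space, and the locus where the Newton polygon of the deformation equals $\xi$ should be cut out by transparent rank conditions on the universal display. The total ordering of admissible Newton polygons and the very short list of possible Dieudonn\'e modules in the $\gu(1, n-1)$ setting make this tractable, which is the key respect in which the present argument is easier than its Siegel counterpart in \cite{chaioort11}.
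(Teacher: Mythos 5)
Your proof is correct and takes essentially the same approach as the paper: both arguments reduce uniqueness to a local statement at a point $z \in Z_\sigma \cap \stk M^{\sigma\circ}$ and verify it by the explicit deformation-theoretic computation of Section \ref{subseclocal}. The paper phrases the local input a bit more strongly --- $\bar{\stk M}^\xi$ is smooth, hence locally irreducible, at $z$ (Lemma \ref{lemlocallysmooth}) --- which immediately forces at most one component of $\bar{\stk M}^\xi$ through $z$ and in particular yields the connectedness of $\mathcal D^\xi$ that you require (and note that in your disconnection step the germs $Z_{\xi,i}\cap\mathcal D$ should be observed to be both open \emph{and} closed in $\mathcal D^\xi$, which they are since the $Z_{\xi,i}$ are the connected components of the smooth $\stk M^\xi$).
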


\begin{proof}
  The existence of such a $Z_\xi$ follows from purity and
  dimension-counting.  If there
  were two such components, then they would intersect along $Z_\sigma
  \cap \stk M^{\sigma\circ}$, which would contradict the smoothness
  shown in Lemma \ref{lemlocallysmooth}.
\end{proof}

\section{Local calculations}
\label{subseclocal}

\begin{lemma}
\label{lemlocallysmooth}
Let $\xi$ be an admissible Newton polygon which is not supersingular,
and suppose $z \in \stk M^{\sigma\circ}(k)$.  Then $\barstk M^\xi$
is smooth at $z$.
\end{lemma}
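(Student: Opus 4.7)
The plan is to use the Serre-Tate theorem to identify the completed local ring $\calo_{\stk M,z}^\wedge$ with the universal deformation ring of the polarized $\calo_L$-$p$-divisible group at $z$, and then, by an explicit computation in the universal display, to exhibit $\barstk M^\xi$ locally at $z$ as a formal power series ring in the expected number of variables.

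By Theorem \ref{thbultelwedhorn}(b), the Dieudonn\'e module of $(X,\iota,\lambda)[p^\infty]$ is $M(n)$ (for $n$ odd) or $M(n-1)\oplus N$ (for $n$ even), with the displays recorded in Section \ref{subsecpdivgp}. Because $\stk M$ is smooth of dimension $n-1$, the universal deformation ring is a formal power series ring $R = k[[t_1,\ldots,t_{n-1}]]$. The first step is to write down an explicit universal display over $R$, using Zink's theory \cite{normalg,zinkdisp}. The $n-1$ parameters should be inserted as perturbations of specific entries of $F$ on the ``$-$''-eigenspace of $\calo_L$, respecting the signature $(1,n-1)$ condition on $\text{Lie}$ and the polarization pairing between the two $\calo_L$-eigenspaces; since the special-fibre display is essentially a single Frobenius cycle of length $2m\in\st{n-1,n}$, the perturbations can be arranged so that each $t_i$ attaches to one ``rung'' of the cycle.

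The second step is to compute the Newton polygon of the universal deformation as a function of $(t_1,\ldots,t_{n-1})$. Iterating $F$ around the perturbed cycle and reading off the $p$-adic valuations of the coefficients of an appropriate power of $F$, one expects a clean combinatorial rule: the condition ``the Newton polygon is $\succeq\xi_{2j}$'' should translate into the vanishing of a specific subset of $\floor{n/2}-j$ of the parameters $t_i$. Consequently $\barstk M^{\xi_{2j}}$ is, locally at $z$, isomorphic to $\spf k[[t_i\colon i\in I_j]]$ for an index set $I_j$ of the expected size $\dim\stk M^{\xi_{2j}}=(n-1)-(\floor{n/2}-j)$, and the smoothness at $z$ is immediate.

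The main obstacle will be the bookkeeping in the second step: one must keep careful track of which perturbations $t_i$ appear in which coefficients of the characteristic polynomial of the relevant Frobenius iterate, and verify that the resulting vanishing conditions match the expected dimensions. The case $n$ even, where the extra $N$-summand slightly complicates the interaction of the polarization with the $\calo_L$-eigenspace decomposition, will need a small amount of additional care, but should not change the qualitative conclusion.
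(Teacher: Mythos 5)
Your proposal follows essentially the same strategy as the paper: invoke Serre--Tate to identify $\stk M^{/z}$ with the universal deformation ring of the quasipolarized $\calo_L$-$p$-divisible group, write out an explicit display over $k\powser{s_2,\ldots,s_n}$ (resp.\ $k\powser{s_0,s_2,\ldots,s_{n-1}}$ for $n$ even), show that the Newton stratification is cut out by vanishing of a specific subset of the coordinates, and read off smoothness of $\barstk M^{\xi}$ from the resulting power-series-ring description. That is exactly what Section~\ref{subseclocal} does, with the smoothness lemma deduced from Lemmas~\ref{lemlocalnpstrataodd} and~\ref{lemlocalnpstrataeven}.

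The one place where the paper's execution is worth noting is the ``bookkeeping'' you flag as the main obstacle. Rather than computing valuations of coefficients of a high power of $F$, the paper encodes the display as a colored cycle graph $\Gamma_S$ and uses the slope estimate of Lemma~\ref{lemslopeest} to get the one-sided bound (nonvanishing of $s_{2i}$ forces the first slope $\le \lambda(2i)$), then closes the argument with a codimension count rather than an explicit converse computation. If you try to compute the Newton polygon directly you will likely find that only one inclusion is tractable by hand; the dimension count is what makes the other direction free, so you should build that into your plan. Also note a small slip of convention: with the paper's ordering ($\sigma$ is the minimum and specialization \emph{decreases} the Newton polygon), the closure $\barstk M^{\xi_{2j}}$ corresponds locally to the condition NP $\preceq \xi_{2j}$, not $\succeq$. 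Finally, observe that only the even-indexed parameters $s_{2j}$ enter the Newton conditions; the others remain free, which is why the strata end up smooth.
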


\begin{proof} 
This follows directly from the explicit calculation (Lemmas
\ref{lemlocalnpstrataodd} and \ref{lemlocalnpstrataeven}) of the
Newton stratification on the formal
neighborhood $\stk M^{/z}$ of $z$.
\end{proof}

The necessary calculations are somewhat sensitive to the parity of
$n$.  We first work out the details when $n$ is odd, and then indicate
the changes necessary to accommodate even $n$.

\subsection{The case of $n$ odd}
\label{subsubnodd}

Throughout this section, assume that $n$ is odd.

\subsubsection{Explicit deformations}
\label{subseclocalcoords}

Suppose $z = (X,\iota,\lambda) \in \stk M^{\sigma\circ}(k)$.   Our
goal is to understand the Newton stratification on the formal
neighborhood $\stk M^{/z} = \spf \til R$ of $z$ in $\stk M$.  This will be
accomplished using (covariant) Dieudonn\'e theory.  
Suppose $z = (X,\iota,\lambda) \in \stk M^{\sigma\circ}(k)$.  Then
the Dieudonn\'e module $\dieu_*(X[p^\infty])$ is isomorphic to $M :=
M(n)$ (Theorem \ref{thbultelwedhorn}).

Deformations of $X[p^\infty]$ are parametrized by $\hom(VM/pM,
M/VM)$; those which preserve the $\calo_L$-structure are classified by
$\hom_{\calo_L\tensor k}(VM/pM, M/VM)$ (e.g., \cite{achterrmjm}).  The
display we have chosen gives coordinates on  
 $VM/pM$ and $M/VM$:
\begin{align*}
VM/pM &= k \st{v_1, u_2, u_3, \cdots, u_n} \\
M/VM &= k \st{u_1, v_2, v_3, \cdots, v_n}
\end{align*}
Consequently,
\begin{align*}
\hom_{\calo_L\tensor k}(VM/pM,M/VM) &= k\st{ v_1^*v_2, v_1^*v_3,
  \cdots, v_1^*v_n, u_2^*u_1, u_3^*u_1, \cdots, u_n^*u_1} \\
& \subset \hom_k(VM/pM,M/VM)  = (VM/pM)^* \tensor (M/VM),
\end{align*}
and the universal equicharacteristic deformation ring of
$(X[p^\infty],\iota[p^\infty])$ is
\[
\til R'  = k\powser{t(v_1v_2), t(v_1v_3), \cdots, t(v_1v_n), t(u_2u_1),
  \cdots, t(u_nu_1)}.
\]
For $t(xy) \in \til R'$, let $\ubar {t}(xy)$ be its Teichmuller
lift to $W(\til R')$.  Then $\til X[p^\infty]$ is displayed over $\til R'$ by
\begin{align*}
\til F u_1 &= -v_n & v_1 &= \til V(u_n + \ubar t(u_nu_1)u_1)\\
\til Fv_2 &= u_1 & u_2 &= \til V(v_1 +\sum_{2 \le j \le n} \ubar t(v_1v_j)v_j)\\
\til F v_3 &= u_2 + \ubar t(u_2u_1)u_1& u_3 &= \til Vv_2 \\
\vdots &&\vdots \\
\til F v_n &= u_{n-1}+ \ubar t(u_{n-1}u_1)u_1 &u_n &= \til V v_{n-1}
\end{align*}
The pairing $\ang{\cdot,\cdot}$ extends to $\til M = M \tensor_{W(k)}
W(\til R')$ by linearity.
We would like to identify the largest quotient $\til R$ of $\til R'$
to which $\ang{\cdot,\cdot}$ extends as a pairing of Dieudonn\'e
modules; for then $\stk M^{/z} \iso \spf \til R$.

The quasipolarization extends to a ring $R$ if and only if, for each $x,y \in
\til M_R := \til M \tensor_{W(\til R')}W(R)$, one has
\[
\ang{\til Fx,y} = \ang{x, \til V}^\sigma.
\]

Suppose $3 \le j \le n$, and let $(x,y) = (v_j, v_1 + \sum_{2 \le k
  \le n} \ubar t(v_1v_k)v_k)$.  Then
\begin{align*}
\ang{\til F x, y} &= \ang{ u_{j-1} + \ubar t(u_{j-1}u_1)u_1, v_1 + \sum_{2
    \le k \le n} \ubar t(v_1v_k)v_k} \\
&= \ubar t(v_1v_{j-1})\ang{u_{j-1}, v_{j-1}} + \ubar t(u_{j-1}u_1)\ang{u_1, v_1}
\\
&= (-1)^{j-1}\ubar t(v_1v_{j-1}) - \ubar t(u_{j-1}u_1),
\intertext{while}
\ang{x, \til Vy} &= \ang{v_j,u_2} \\
&= 0.
\end{align*}
Consequently, if $\til M_R$ is  quasipolarized by
$\ang{\cdot,\cdot}$, then for each $2 \le k \le n-1$, the image of
$(-1)^k t(v_1v_k) - t(u_ku_1)$ in $R$ is zero.

Similarly, by considering $(x,y) = (v_1, v_2 + \sum \ubar t(v_2v_j)v_j)$, we
see that the image of $t(v_1v_n)$ in such an $R$ must be zero.

The quotient $\til R$ of $\til R'$ by these relations is a smooth, local ring of
dimension $n-1$, and thus we identify $\til R$ with
\[
\til R = k\powser{s_2, \cdots, s_n},
\]
where $s_j$ is the image of $t(u_ju_1)$ in $\til R$.   We record these
calculations as follows.

\begin{lemma}
The formal neighborhood $\stk M^{/z}$ of $z$ is isomorphic to 
$\til R = \spf k\powser{s_2, \cdots, s_n}$.  Over $\til R$, the
Dieudonn\'e module $\til M = \til M_{\til R}$ of the universal deformation of $(X[p^\infty], \iota[p^\infty],
\lambda[p^\infty])$ is displayed by
\begin{align*}
\til F u_1 &= -v_n & v_1 &= \til V(u_n  - \ubar s_nu_1)\\
\til Fv_2 &= u_1 & u_2 &= \til V(v_1 +\sum_{2 \le j \le n} (-1)^j \ubar s_jv_j)\\
\til F v_3 &= u_2 + \ubar s_2u_1& u_3 &= \til Vv_2 \\
\vdots &&\vdots \\
\til F v_n &= u_{n-1}+ \ubar s_{n-1}u_1 &u_n &= \til V v_{n-1}
\end{align*}
\end{lemma}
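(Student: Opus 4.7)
The plan is to complete the deformation-theoretic calculation already sketched in the discussion preceding the lemma, in three steps: (1) identify the $\calo_L$-equivariant universal deformation ring $\til R'$; (2) impose the polarization compatibility to descend to $\til R$; and (3) read off the resulting display.

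For step (1), the Serre--Tate theorem identifies $\stk M^{/z}$ with the universal deformation space of $(X[p^\infty], \iota[p^\infty], \lambda[p^\infty])$. Ignoring the polarization, $\calo_L$-equivariant deformations of $X[p^\infty]$ are classified by $\hom_{\calo_L \tensor k}(VM/pM, M/VM)$. With respect to the display basis, $VM/pM$ has basis $\st{v_1, u_2, \ldots, u_n}$ and $M/VM$ has basis $\st{u_1, v_2, \ldots, v_n}$. Decomposing each into its $\calo_L$-eigenspaces, the $\calo_L$-equivariant Hom space is spanned by the $2(n-1)$ generators $v_1^* v_j$ and $u_j^* u_1$ for $2 \le j \le n$; the corresponding $\til R'$ and its universal Teichmuller-lifted display are as given in the text.

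Step (2) is the core calculation. The symplectic pairing extends to a quotient $R$ of $\til R'$ as a pairing of Dieudonn\'e modules precisely when $\ang{\til F x, y} = \ang{x, \til V y}^\sigma$ holds for all $x, y \in \til M_R$; by semilinearity and bilinearity, it suffices to test on pairs of basis vectors. The pairing $\ang{u_i, v_j} = (-1)^i \delta_{ij}$ is sparse, and the display expresses $\til F, \til V$ on basis vectors with very few nonzero coefficients, so the overwhelming majority of pairs yield tautologies. The nonvacuous relations are exactly the two families already flagged: taking $(x, y) = (v_j, v_1 + \sum_k \ubar t(v_1 v_k) v_k)$ for $3 \le j \le n$ produces $(-1)^{j-1} t(v_1 v_{j-1}) = t(u_{j-1} u_1)$, and taking $(x, y) = (v_1, v_2 + \sum_k \ubar t(v_2 v_k) v_k)$ produces $t(v_1 v_n) = 0$. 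A systematic enumeration of the remaining basis pairs $(e_i, e_j)$ confirms that no independent new relation arises. For step (3), these $n-1$ relations eliminate every $t(v_1 v_k)$ in favor of the $t(u_j u_1)$; setting $s_j$ to be the image of $t(u_j u_1)$ in $\til R$ identifies $\til R$ with $k\powser{s_2, \ldots, s_n}$. Substituting $t(v_1 v_k) = (-1)^k s_k$ for $2 \le k \le n-1$ and $t(v_1 v_n) = 0$ back into the display over $\til R'$ yields the display claimed in the statement.

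The main obstacle is ensuring completeness in step (2): it is straightforward to find the two listed families of relations, but trickier to verify that no further constraints are hidden in the interaction of $\til F$, $\til V$, and $\ang{\cdot, \cdot}$. The cleanest safeguard is the dimension count: $\stk M$ is smooth of dimension $n-1$ (being a PEL Shimura variety) and so $\stk M^{/z}$ is regular of dimension $n-1$; since the quotient $\til R = k\powser{s_2, \ldots, s_n}$ produced by our relations is already regular of dimension $n-1$ and admits a surjection from it (because $\stk M^{/z}$ inherits the polarization-compatible structure), the surjection must be an isomorphism and no further relations can exist.
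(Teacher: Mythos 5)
Your proposal is correct and follows the same route as the paper: classify $\calo_L$-equivariant deformations via $\hom_{\calo_L\tensor k}(VM/pM, M/VM)$, test the compatibility $\ang{\til F x,y}=\ang{x,\til Vy}^\sigma$ to extract the $n-1$ relations, and pass to the quotient. The one place you are more explicit than the paper is in justifying that no further relations can appear: the paper simply remarks that the quotient is smooth of dimension $n-1$ and identifies it with $\til R$, whereas you supply the dimension-count argument (the surjection $\til R \twoheadrightarrow \stk M^{/z}$ between regular local rings of the same dimension, with $\til R$ a domain, forces an isomorphism), which is exactly the missing justification and is the right way to close that loop.
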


\subsubsection{Newton strata in local coordinates}
\label{subseclocalnp}

In this choice of coordinates, it is easy to calculate the Newton
stratification on $\stk M^{/z}$.  For $1 \le j \le \floor{n/2}$, let
\[
\stk M^{/z}_{<j} := \stk M^{/z} \cap (\stk M^\sigma \cup \bigcup_{1 \le i <
  j} \stk M^{\xi_{2i}})
\]
be the locus in $\stk M^{/z}$ parametrizing those deformations whose
first slope is strictly larger than $\lambda(2j)$.

\begin{lemma}
\label{lemlocalnpstrataodd}
Suppose $1 \le j \le \floor{n/2}$.  Then
\[
\stk M^{/z}_{<j} = \spf \frac{k\powser{s_2, \cdots, s_n}}{(s_{2j}, s_{2(j+1)}, \cdots,
  s_{2\floor{n/2}})} \subset \stk M^{/z} = \spf k\powser{s_2, \cdots, s_n}.
\]
\end{lemma}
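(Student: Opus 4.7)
The plan is to compute the Newton polygon of the universal deformation $\til M$ over $\til R$ directly by iterating Frobenius, and to match the resulting Newton stratification with the stated ideal.

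Since $p$ is inert in $\calo_L$, the Frobenius $\til F$ interchanges the two $\calo_L$-eigenspaces of $\til M$, so $\til F^2$ preserves each. The Newton polygon of $\til M$ is therefore determined by the action of $\til F^2$ on the $u$-eigenspace $\bigoplus W(\til R) u_i$. A direct calculation from the display gives
\begin{align*}
\til F^2 u_1 &= -u_{n-1} - \ubar s_{n-1} u_1, & \til F^2 u_3 &= p u_1, \\
\til F^2 u_j &= p u_{j-2} + p \ubar s_{j-2} u_1 \quad (4 \le j \le n), & \til F^2 u_2 &= p^2 u_n + p\cdot \Phi(\ubar s),
\end{align*}
where $\Phi(\ubar s)$ is linear in the deformation parameters. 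At the closed point $z$ all $\ubar s_k = 0$, and these formulas describe the undeformed ``long cycle'' $u_1 \to u_{n-1} \to u_{n-3} \to \cdots \to u_2 \to u_n \to \cdots \to u_3 \to u_1$ of length $n$ with total $p$-valuation $n$, recovering the supersingular slope $\frac12$ predicted by Theorem \ref{thbultelwedhorn}.

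The central step is to iterate $\til F^2$ starting from $u_1$ and extract the coefficient of $u_1$ in $\til F^{2(l+1)} u_1$. The even-indexed parameters $\ubar s_{n-1}, \ubar s_{n-3}, \ldots, \ubar s_2$ (equivalently $\ubar s_{2\floor{n/2}}, \ldots, \ubar s_2$) act as ``shortcuts'' that return the orbit directly to $u_1$ at an intermediate step. Careful bookkeeping should show that the leading contribution to the $u_1$-coefficient of $\til F^{2(l+1)} u_1$ is
\[
\pm\, p^l\, \ubar s_{n-2l-1}^{\sigma^{2l}},
\]
with all competing contributions either of strictly higher $p$-valuation or involving products of two or more deformation parameters. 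Since $s_{n-2l-1} = s_{2(\floor{n/2}-l)}$, and since the resulting slope of $\til F$ is $l/(2(l+1)) = \lambda(2(\floor{n/2}-l))$, the first slope of the Newton polygon at a $k$-point of $\spf \til R$ equals $\lambda(2i)$ for the largest $i$ with $\ubar s_{2i} \ne 0$ (and is $\tfrac12$ if all $\ubar s_{2i}$ vanish). In particular, on the vanishing locus of $(s_{2j}, s_{2(j+1)}, \ldots, s_{2\floor{n/2}})$ the first slope strictly exceeds $\lambda(2j)$, yielding the inclusion of that vanishing locus into $\stk M^{/z}_{<j}$.

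For equality I would compare dimensions. The vanishing locus is smooth of dimension $(n-1) - (\floor{n/2}-j+1) = n - \floor{n/2} + j - 2$, which matches $\dim \stk M^{\xi_{2(j-1)}}$ (the largest Newton stratum contained in $\stk M^{/z}_{<j}$, computed from Theorem \ref{thbultelwedhorn}). By Lemma \ref{lemxidegenerates} and purity, $\stk M^{/z}_{<j}$ contains an irreducible component of exactly this dimension passing through $z$, so the inclusion must be an equality. The main obstacle is the bookkeeping in the iteration: one must rule out that the leading shortcut term $\pm p^l \ubar s_{n-2l-1}^{\sigma^{2l}}$ is cancelled or masked by contributions from odd-indexed parameters, or by cross-terms funneled through the more intricate formula for $\til F^2 u_2$. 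I expect this to follow from a combinatorial $p$-valuation count: any detour through an odd-indexed $\ubar s_k$ lands at a basis element of the ``wrong parity'' in the cycle, forcing at least one additional factor of $p$ before any return to $u_1$, and the $p^2$-factor in $\til F^2 u_2$ ensures that the $u_2$-branch only contributes to higher slopes of the Newton polygon.
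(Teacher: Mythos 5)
Your underlying computation — iterating $\til F^2$ on the $u$-eigenspace and locating which $s_{2i}$ create a short return loop to $u_1$ — is essentially the same content as the paper's graph-theoretic cycle argument (Lemmas \ref{lemslopeest}--\ref{lemapproxequations}); the graph $\Gamma_S$ is just cleaner bookkeeping for exactly the Frobenius iteration you are carrying out. So the \emph{computation} is on the right track. The problem is the \emph{logical direction} in which you try to use it, and the resulting dimension count does not close.

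What the iteration (or the cycle lemma) gives you easily is an \emph{upper} bound on the first slope: if $s_{2i}\ne 0$ for some $i\ge j$, then the cycle of length $2(\floor{n/2}+1-i)$ and weight $\floor{n/2}-i$ forces the first slope to be $\le\lambda(2i)\le\lambda(2j)$, so the point is \emph{not} in $\stk M^{/z}_{<j}$. That yields $\stk M^{/z}_{<j}\subseteq V_j := V(s_{2j},\dots,s_{2\floor{n/2}})$ — which is the paper's Lemma \ref{lemapproxequations}. Your proposal instead tries to extract the \emph{reverse} inclusion ``on $V_j$ the first slope strictly exceeds $\lambda(2j)$,'' i.e.\ a \emph{lower} bound on the slope. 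That does not follow from knowing the leading term of the $u_1$-coefficient: on $V_j$ that leading term vanishes by construction, and you would have to control \emph{all} the competing contributions (products of odd-indexed parameters, the $p^2$-branch through $u_2$, etc.) to rule out some other cycle of slope $\le\lambda(2j)$. You flag this as ``bookkeeping,'' but it is not a finite amount of bookkeeping you have deferred; it is the genuinely hard half of the ``first slope $=\lambda(2i)$'' dichotomy, and the paper never proves it this way.

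Because your inclusion points the wrong way, the dimension count also fails. From $V_j\subseteq\stk M^{/z}_{<j}$ and both being $d$-dimensional you can only conclude that $V_j$ is \emph{one} irreducible component of $\stk M^{/z}_{<j}$; you have not ruled out other components of dimension $\le d$. The paper's count works precisely because it has the opposite containment $\stk M^{/z}_{<j}\subseteq V_j$: a closed subscheme of the irreducible formal scheme $\spf R_{<j}$ of the same dimension is all of it. If you reverse your conclusion (use the upper slope bound to get $\stk M^{/z}_{<j}\subseteq V_j$, as in Lemma \ref{lemapproxequations}), then your dimension count becomes the paper's and the argument closes. As an aside, for the upper bound you also need that the displayed term $\pm p^l\ubar s_{n-2l-1}^{\sigma^{2l}}$ is not cancelled, i.e.\ that $F^b u_1\notin\til N_K$; this is exactly the nondegeneracy the paper uses in Lemma \ref{lemslopeest}, and it is the only cancellation issue you actually need to settle — far less than controlling \emph{all} competing terms as you attempt to do.
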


\begin{figure}
\label{figgamma}
\begin{center}
\begin{tikzpicture}[yscale=0.5,>=stealth]
\foreach \x in {1,...,7}
{
  \filldraw[black] (\x,0) circle (0.1cm);
  \node [above] at (\x,2) {$u_{\x}$};
  \filldraw[black] (\x,2) circle (0.1cm);
  \node [below] at (\x,0) {$v_{\x}$};
}
\begin{scope}[very thick]
\foreach \x in {2,3,...,7}
 \draw[->>] (\x,2) -- ++(-1,-2);
\draw[->>] (1,0) .. controls (2,-2) and (9,-2) .. (7,2);
\end{scope}
\begin{scope}[thin]
\foreach \x in {2,3,...,7}
 \draw[->>] (\x,0) -- ++(-1,2);
\draw [->>](1,2) ..  controls (-1,-2) and (5,-2) .. (7,0);
\end{scope}
\end{tikzpicture}
\end{center}
\caption{The graph $\Gamma$ for $n=7$.}
\end{figure}
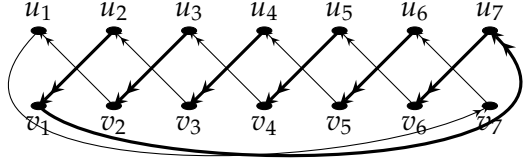

Before proceeding with the proof, we construct a graph to encode part
of the structure of (a deformation of) $M$. Initially, construct a
graph $\Gamma$ as follows (see Figure \ref{figgamma}).  With a slight
abuse of notation, let the vertex set be $\st{u_1, \cdots, u_n, v_1,
  \cdots, v_n}$.  For $2 \le i \le n$, draw a (light) gray arrow from
$v_i$ to $u_{i-1}$, to encode the fact that $Fv_i = u_{i-1}$.
Similarly, draw a gray arrow from $u_1$ to $v_n$.

Also, for each $2 \le i \le n$, draw a black arrow from $u_i$ to
$v_{i-1}$, to encode the fact that $Fu_i = pv_{i-1}$.  Similarly, draw
a black arrow from $v_1$ to $u_n$.

Note that $\Gamma$ is a (colored) cycle.  In fact, starting from
vertex $u_1$, one successively visits 
\[
\st{u_1,v_n,u_{n-1},
  v_{n-2},u_{n-3}, \cdots, v_1, u_n, v_{n-1}, u_{n-2}, \cdots, v_2,
  u_1}.
\]

Now let $S$ be an integral domain equipped with a surjection $\phi:k\powser{s_2,
  \cdots, s_n} \ra S$, and let $K$ be the field of fractions of $S$.
Construct a graph $\Gamma_S$ by (possibly) augmenting the edge set of
$\Gamma$, as follows.

For each $2 \le i \le n-1$, if $\phi(s_i)\not = 0$, then add a gray edge
from $v_j$ to $u_1$.   (For the sake of completeness, if $\phi(s_n)\not =0$,
  then add a black edge from $v_1$ to $u_1$.  For each $2 \le i \le
  n-1$, if $\phi(s_i)\not = 0$, then add a black edge from $u_2$ to
  $v_i$.  These additional black edges will not affect the final calculation.)

Let $C$ be a cycle or path in $\Gamma_S$.  The length of $C$ is the
number of edges in $C$, while the weight of $C$ is the number of black
edges in $C$.  Define the slope of $C$ to be
\[
\lambda(C) = \frac{{\rm weight}(C)}{{\rm length}(C)}.
\]

Note that for the trivial deformation, corresponding to $\Gamma$
itself, we have $\lambda(\Gamma) = \frac{n}{2n} = \half 1$.

\begin{lemma}
\label{lemslopeest}
If $C \subset \Gamma_S$ is a cycle through $u_1$, then the smallest
slope of the Newton polygon is at most $\lambda(C)$.
\end{lemma}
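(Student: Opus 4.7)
The strategy is to use the cycle $C$ to produce a distinguished contribution of controlled $p$-adic valuation to an iterate of Frobenius applied to $u_1$, and then to invoke the Dieudonn\'e--Manin classification to extract a bound on the smallest Newton slope of $\til M \otimes_{W(\til R)} W(\bar K)$, where $K = \Frac(S)$.

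Concretely, each edge of $\Gamma_S$ corresponds, via the display of Section \ref{subseclocalcoords}, to a summand of the Frobenius action on some basis vector of $\til M \otimes_{W(\til R)} W(K)$: a gray edge $y \to y'$ records that $Fy$ has $y'$ as a summand with a unit coefficient in $W(K)$ (either $\pm 1$ for an original edge, or $\pm \ubar{\phi(s_j)}$ for a deformation edge, the latter being a unit in $W(K)$ because $\phi(s_j) \ne 0$), while a black edge records the same kind of summand with an additional factor of $p$. Composing edge coefficients around $C = (u_1, y_1, \ldots, y_{\ell-1}, u_1)$ produces a distinguished summand $c \cdot u_1$ of $F^\ell u_1$ with $v_p(c) = w$, and traversing $C$ a total of $k$ times yields a summand $c_k \cdot u_1$ of $F^{k\ell} u_1$ with $v_p(c_k) = kw$.

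Granting that this contribution is not cancelled in the $u_1$-coordinate by other length-$k\ell$ paths from $u_1$ to itself in $\Gamma_S$, so that $v_p(F^{k\ell} u_1) \le kw + O(1)$ uniformly in $k$, the conclusion follows from Dieudonn\'e--Manin. Decomposing $\til M \otimes \Frac W(\bar K) \iso \bigoplus_\lambda E_\lambda^{m_\lambda}$ into isoclinic pieces and writing $u_1 = \sum_\lambda u_1^{(\lambda)}$, each component satisfies $v_p(F^n u_1^{(\lambda)}) = n\lambda + O(1)$. The bound above then forces some nonzero $u_1^{(\lambda)}$ with $\lambda \le w/\ell + O(1/k)$; passing to the limit $k \to \infty$ produces a slope of $\til M$ at most $w/\ell$, so $\lambda_{\min} \le w/\ell = \lambda(C)$.

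The principal obstacle is justifying the no-cancellation claim: the $u_1$-coordinate of $F^{k\ell} u_1$ is a sum of contributions from all length-$k\ell$ paths in $\Gamma_S$ from $u_1$ to itself, and a priori the contribution from the $k$-fold traversal of $C$ could be annihilated by the others. This is automatic over the generic point of the locus $\{\phi(s_j) \ne 0 : j$ indexes an edge of $C\} \subset \spf \til R$, where the $\phi(s_j)$'s are algebraically independent and distinct paths therefore contribute distinct monomials in the $\ubar{\phi(s_j)}$; for arbitrary $S$ with $C \subset \Gamma_S$ one then descends to the generic case, either by a direct combinatorial enumeration of length-$k\ell$ paths through $u_1$ in $\Gamma_S$ or via semi-continuity of the Newton polygon on $\spf \til R$.
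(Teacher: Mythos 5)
Your core strategy is the same as the paper's: the cycle $C$ produces a term of valuation $a$ (weight of $C$) in the $u_1$-coordinate of $F^b u_1$ (or of $F^{kb}u_1$ after $k$ traversals, with valuation $ka$), and one passes from this to a bound $\lambda_{\min}\le a/b$. The paper phrases this compactly by looking at the image of $F^b u_1$ in $\til M_K/\til N_K$ where $\til N_K = W(K)\{u_2,\dots,u_n,v_1,\dots,v_n\}$, asserting $F^b u_1\notin \til N_K$, and reading off a slope bound for the resulting rank-one object; your version is the same idea unwound via Dieudonn\'e--Manin and valuation asymptotics. So in spirit you and the paper are doing the same thing, and you are right that the only delicate point is the possibility of cancellation among different length-$kb$ walks from $u_1$ to itself.

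However, your proposed fix for the cancellation issue does not work. Semicontinuity of Newton polygons runs in the \emph{wrong direction} for what you want: under specialization on $\spf\til R$ the Newton polygon can only rise (i.e., $\lambda_{\min}$ can only \emph{increase}). So establishing $\lambda_{\min}\le\lambda(C)$ at the generic point of $\{\phi(s_j)\ne 0: j\text{ indexes an edge of }C\}$ gives no information at a more special point, which is exactly where a putative algebraic relation among the $\phi(s_j)$ could cause cancellation. Thus ``descend to the generic case \dots via semi-continuity'' is not a valid route. The alternative you mention, a direct combinatorial enumeration of walks in $\Gamma_S$, is the route that would actually have to be carried out, but you do not supply it. (For what it's worth, the paper's own proof also does not spell out why there is no cancellation; it simply asserts $F^b u_1\notin\til N_K$, and the displayed containment $F^b u_1\in p^a W(K)\{u_1\}+\til N_K$ reads as a lower bound on valuation, which is likely a typo for the statement that the $u_1$-coefficient has valuation exactly $a$. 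So you have identified a real subtlety that the paper glosses over, but your proposed resolution has a sign error in the direction of semicontinuity.)
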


\begin{proof} It is harmless, and convenient, to replace $K$
  by its perfection.  Suppose there is a cycle $C$ of
  length $b$ and weight $a$; let $\til N_K = W(K)\st{u_2, \cdots, u_n, v_1,
    \cdots, v_n}$.   Then $F^b u_1  \in p^aW(K)\st{u_1} + \til N_K$ but
  $F^bu_1 \not \in \til N_K$, and $\til M_K/\til N_K$ is an $F$-$\sigma^a$-crystal
  of slope at most $a/b$.  Therefore, the smallest slope of $\til M_K$ is
  at most $a/b$.
\end{proof}

\begin{remark}
Let $B(K) =
\Frac W(K)$; then the $B(K)[F]$-span
of $u_1$ in $\til M_K \tensor B(K)$ is all of $\til M_K\tensor B(K)$.
Therefore, one can in fact 
show that the smallest slope of $\til M_K$ is
\[
\min_{C \subset \Gamma\text{ a cycle through }u_1} \lambda(C).
\]
\end{remark}

\begin{lemma}
\label{lemgammascycle}
If $\phi(s_{2i})\not = 0$, then there is a cycle in $\Gamma_S$ of
length $n+1-2j$ and weight $\half{n-1}-j$.
\end{lemma}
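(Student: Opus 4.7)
My plan is to read off the cycle directly from the display of $\til M$ computed just above. I will take the hypothesis as $\phi(s_{2j}) \neq 0$, since that is the only reading under which the indices in the conclusion line up. From the display one has
\[
\til F v_{2j+1} = u_{2j} + \ubar s_{2j}\, u_1,
\]
so $\Gamma_S$ acquires a new gray edge $v_{2j+1} \to u_1$. There is an off-by-one to keep straight: $\ubar s_k u_1$ appears in $\til F v_{k+1}$, so $s_k \ne 0$ produces a new edge whose tail is $v_{k+1}$, not $v_k$.

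Granted this edge, the candidate cycle is the initial segment of the fundamental cycle of $\Gamma$ from $u_1$ down to $v_{2j+1}$, closed by the new edge:
\[
u_1 \to v_n \to u_{n-1} \to v_{n-2} \to \cdots \to u_{2j+2} \to v_{2j+1} \to u_1.
\]
The first step of the verification is a length count: the opening edge $u_1 \to v_n$ contributes $1$, the descending path $v_n \to u_{n-1} \to \cdots \to v_{2j+1}$ contributes $n-2j-1$ edges (since the indices step down one at a time from $n$ to $2j+1$), and the closing edge contributes $1$ more, for a total of $n+1-2j$.

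The second step is a weight count. The black edges of $\Gamma$ are exactly those of the form $u_i \to v_{i-1}$, plus the single edge $v_1 \to u_n$ (which is not traversed by the cycle above). Among the edges of the cycle the black ones are therefore
\[
u_{n-1} \to v_{n-2},\ u_{n-3} \to v_{n-4},\ \ldots,\ u_{2j+2} \to v_{2j+1},
\]
and there are $(n-2j-1)/2$ of them, an integer because $n$ is odd. Neither the opening edge $u_1 \to v_n$ nor the closing gray edge $v_{2j+1} \to u_1$ contributes, so the total weight is $(n-2j-1)/2 = (n-1)/2 - j$, as required. I do not anticipate any real obstacle here; it is a bookkeeping check, and the only subtlety is the index shift between $s$ and the head of the new edge that it creates.
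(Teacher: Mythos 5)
Your proof is correct and follows the same approach as the paper: identify the unique path in $\Gamma$ from $u_1$ to $v_{2j+1}$, note that the non-vanishing of $\phi(s_{2j})$ adds a gray edge $v_{2j+1}\to u_1$ closing it into a cycle, and count length and weight. You also correctly spot that the hypothesis in the statement should read $\phi(s_{2j})\neq 0$ (the index $i$ is a typo) and that the new edge created by $s_k$ has tail $v_{k+1}$, both of which match the paper's proof.
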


\begin{proof}
In $\Gamma$, the unique path $P$ from $u_1$ to $v_{2j+1}$ has length
$n+1-(2j+1)=n-2j$ and weight $\half{n-(2j+1)} = \half{n-1}-j$.  If
$\phi(s_{2j})\not = 0$, then in $\Gamma_S$ there is a cycle, obtained
by concatenating $u_1$ to $P$, of length ${\rm length}(P)+1$ and
weight ${\rm weight}(P)$.
\end{proof}

\begin{lemma}
\label{lemapproxequations}
If the smallest slope of $\til M_K$ is greater than $\lambda(2j)$, then
\[
\phi(s_{2j}) = \phi(s_{2(j+1)}) = \cdots = \phi(2\floor{n/2}) =0.
\]
\end{lemma}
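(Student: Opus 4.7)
The plan is to argue by contrapositive, combining Lemmas \ref{lemgammascycle} and \ref{lemslopeest} with an arithmetic comparison of slopes. Suppose, for contradiction, that $\phi(s_{2i}) \ne 0$ for some index $i$ with $j \le i \le \floor{n/2}$. The goal is then to exhibit a cycle through $u_1$ in $\Gamma_S$ whose slope is at most $\lambda(2j)$; by Lemma \ref{lemslopeest}, this will force the smallest slope of $\til M_K$ to be at most $\lambda(2j)$, contradicting the strict inequality in the hypothesis.

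The first step is to feed the index $i$ into Lemma \ref{lemgammascycle}, obtaining a cycle $C \subset \Gamma_S$ through $u_1$ of length $n+1-2i$ and weight $(n-1)/2 - i$. The key arithmetic check is that, because $n$ is odd (so $\floor{n/2} = (n-1)/2$ and $n+1-2i = 2(\floor{n/2}+1-i)$), the slope of this cycle coincides with $\lambda(2i)$:
\[
\lambda(C) \;=\; \frac{(n-1)/2 - i}{n+1-2i} \;=\; \frac{1}{2} - \frac{1}{n+1-2i} \;=\; \frac{1}{2} - \frac{1}{2(\floor{n/2}+1-i)} \;=\; \lambda(2i).
\]

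The final step is to note that $j \mapsto \lambda(2j)$ is strictly decreasing in $j$: as $j$ grows, the quantity $\floor{n/2}+1-j$ shrinks, so $\frac{1}{2(\floor{n/2}+1-j)}$ grows, so $\lambda(2j)$ drops. Since $i \ge j$, this gives $\lambda(C) = \lambda(2i) \le \lambda(2j)$, and Lemma \ref{lemslopeest} then yields the desired contradiction. Hence $\phi(s_{2i}) = 0$ for every $i$ in the range $j \le i \le \floor{n/2}$. I do not expect any substantial obstacle here; once Lemmas \ref{lemgammascycle} and \ref{lemslopeest} are in hand the statement is essentially bookkeeping, and the only mildly delicate point is the arithmetic identification $\lambda(C) = \lambda(2i)$, which is specific to the parity assumption $n$ odd and will need to be redone in the even case.
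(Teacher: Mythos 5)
Your argument is correct and follows exactly the paper's route: contrapositive via Lemma \ref{lemgammascycle} to produce a cycle, the identification of its slope with $\lambda(2i)$, monotonicity of $j \mapsto \lambda(2j)$, and then Lemma \ref{lemslopeest}. The paper states this in one sentence; you have simply filled in the arithmetic that the paper leaves implicit.
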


\begin{proof}
The contrapositive follows immediately from Lemmas
\ref{lemgammascycle} and \ref{lemslopeest}; if there is some $i
\ge j$ with $\phi(s_{2i})\not = 0$, then the smallest slope of $\til M_K$
is at most $\lambda(2i)$.
\end{proof}

\begin{proof}[Proof of Lemma \ref{lemlocalnpstrataodd}]
By Lemma \ref{lemapproxequations}, the sought-for neighborhood $\stk
N^{/z}_{<j}$ is the
formal spectrum of a quotient of $R_{<j} := k\powser{s_1, \cdots, s_n}/(s_{2j},
s_{2(j+1)},  \cdots, s_{2\floor{n/2}})$.  We thus have $\stk
N^{/z}_{<j} \inject \spf R_{<j} \inject \stk M^{/z}$.  Since both
$\stk M^{/z}_{<j}$ and $\spf R_{<j}$ have codimension
$\floor{n/2}-j+1$, the result follows.
\end{proof}

\subsection{The case of $n$ even}

We now indicate the changes which must be made in order to perform the
calculations of Section \ref{subsubnodd} in the case where $n$ is even.

Suppose $z= (X,\iota,\lambda) \in \stk M^{\sigma\circ}(k)$.  The
quasipolarized Dieudonn\'e module $M$ of $X[p^\infty]$, as a $p$-divisible
group with $\calo_L$-action, is $M(n-1)\oplus N$ (Theorem
\ref{thbultelwedhorn}).  A calculation exactly like that in
Section \ref{subseclocalcoords} shows $\stk M^{/z} \iso \spf \til R =
\spf k\powser{s_0, s_2, \cdots, s_{n-1}}$; the corresponding
deformation $\til M$ of $M$ is displayed by
\begin{align*}
\til F v_0 &= -u_0-\ubar s_0u_1 & u_0 &= \til V v_0 \\
\til F u_1 &= -v_{n-1} & v_1 &= \til V(u_{n-1}  - \ubar s_{n-1}u_1)\\
\til Fv_2 &= u_1 & u_2 &= \til V(v_1 +\sum_{2 \le j \le n-1} (-1)^j \ubar s_jv_j)\\
\til F v_3 &= u_2 + \ubar s_2u_1& u_3 &= \til Vv_2 \\
\vdots &&\vdots \\
\til F v_{n-1} &= u_{n-2}+  \ubar s_{n-2}u_1 &u_{n-1} &= \til V v_{n-2}
\end{align*}
Construction and analysis of graphs $\Gamma$ and $\Gamma_S$, for
quotients $S$ of $\til R$, shows that Lemma \ref{lemlocalnpstrataodd} holds
for even $n$, too:

\begin{lemma}
\label{lemlocalnpstrataeven}
Suppose $1 \le j \le n/2$.  Then
\[
\stk M^{/z}\cap (\cup_{1 \le i \le j} \stk M^{\xi_{2i}}) = \spf
\frac{k\powser{s_0, s_2, s_3, \cdots, s_n}}{(s_{2j},s_{2(j+1)},
  \cdots, s_{n})}.
\]
\end{lemma}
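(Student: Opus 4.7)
The plan is to mirror the proof of Lemma \ref{lemlocalnpstrataodd} verbatim, adapting the graph-theoretic machinery to the fact that the Dieudonn\'e module now splits as $M(n-1)\oplus N$ rather than as a single $M(n)$. The extra summand $N$ contributes only a small ``dead-end'' 2-cycle, so the main combinatorics driving the Newton stratification will be the odd-case analysis applied to the $M(n-1)$ summand, with $n$ replaced by $n-1$ throughout.

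First I would define the graph $\Gamma$ on vertex set $\{u_0,v_0\}\cup\{u_i,v_i : 1\le i\le n-1\}$. The $N$-summand contributes a gray edge $v_0\to u_0$ and a black edge $u_0\to v_0$, forming a 2-cycle of slope $1/2$. The $M(n-1)$-summand contributes its long cycle of length $2(n-1)$ and weight $n-1$ (slope $1/2$), exactly as in the odd case with $n$ replaced by $n-1$. For a quotient $S$ of $\til R$, construct $\Gamma_S$ by adjoining gray shortcut edges $v_j\to u_1$ (for $3\le j\le n-1$) when $\phi(s_{j-1})\ne 0$, a gray shortcut $v_0\to u_1$ when $\phi(s_0)\ne 0$, and black shortcuts arising from $s_{n-1}$ and from the expansion of $\til F u_2$. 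Then I would establish the even-$n$ analogues of Lemmas \ref{lemslopeest}, \ref{lemgammascycle}, and \ref{lemapproxequations}: the slope-estimate bound carries over word for word; the cycle-existence statement shows that $\phi(s_{2i})\ne 0$ produces a cycle through $u_1$ of length $n-2i$ and weight $((n-2)-2i)/2$, realizing the slope $\lambda(2i)$ (this is literally the odd calculation applied to $M(n-1)$); and combining the two yields the required vanishing of the relevant $s_{2i}$ along the locus where the minimum slope exceeds $\lambda(2j)$. A dimension count, identical to the one concluding the proof of Lemma \ref{lemlocalnpstrataodd}, then identifies the defining equations of the intersection $\stk M^{/z}\cap(\cup_{i\le j}\stk M^{\xi_{2i}})$ in $\stk M^{/z}$.

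The main obstacle will be handling the parameter $s_0$, which I expect to enter the stratification in a subtler way than the $s_{2i}$ for $i\ge 1$: the shortcut edge $v_0\to u_1$ created by $\phi(s_0)\ne 0$ does not by itself lie on any simple cycle through $u_1$ in $\Gamma_S$, because no edge of the main graph points into the $N$-island. Verifying that the codimension count nevertheless matches the $\floor{n/2}-j+1$ demanded by the chain of Newton strata will require either a refined combinatorial argument tracking cycles that traverse both subgraphs (using the iterated Frobenius $\til F^{\,b}v_0$ to see how $N$ is glued to $M(n-1)$), or equivalently an extension-of-isocrystals argument showing that modulo the vanishing of the relevant $s_{2i}$ the contribution of $s_0$ to the Newton polygon can be read off from the glueing data directly.
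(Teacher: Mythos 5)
You have the right overall plan (carry over the graph machinery word for word, treating $M(n-1)$ as the ``odd case with $n$ replaced by $n-1$'' plus a small $N$-island), and you have correctly flagged the one genuine subtlety, the role of $s_0$. But the two proposed ways you sketch for dealing with $s_0$ leave the proof unfinished, and your slope computation contains an index shift that is in fact exactly where $s_0$ is meant to fit. First, the index shift: the cycle you produce from $\phi(s_{2i})\ne 0$ (the shortcut $v_{2i+1}\to u_1$ concatenated with the path in $\Gamma(M(n-1))$) has length $n-2i$ and weight $n/2-1-i$, so its slope is $\tfrac12-\tfrac{1}{2(n/2-i)}$. This is $\lambda(2(i+1))$, not $\lambda(2i)$ --- it is the slope $\lambda'(2i)$ you would get from the odd-case formula applied to $M(n-1)$, but $\lambda'(\cdot)$ and $\lambda(\cdot)$ differ by one index because $\floor{(n-1)/2}=n/2-1$. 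This shift is not cosmetic: it is what leaves $\lambda(2)$ with no corresponding $s_{2i}$ among $i\ge1$, and that slot is filled precisely by $s_0$.

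Second, and more importantly, your worry that ``no edge of the main graph points into the $N$-island'' dissolves once the polarization constraint is imposed correctly on the even display. The constraint $\ang{\til F v_0,\,\til V^{-1}u_2}=\ang{v_0,u_2}^\sigma=0$ forces $\til V^{-1}u_2$ to carry an $s_0 v_0$ term, i.e., the deformation of the $u$-eigenspace direction $u_0\mapsto u_0+s_0u_1$ is mirrored in the $v$-eigenspace by $v_1\mapsto v_1+s_0v_0$ inside $\til V^{-1}u_2$. Consequently $\til F u_2 = p\,\til V^{-1}u_2$ acquires a $p\,s_0 v_0$ term, so when $\phi(s_0)\ne 0$ the graph $\Gamma_S$ does contain a black edge $u_2\to v_0$. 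Concatenating the $\Gamma(M(n-1))$-path $u_1\to v_{n-1}\to\cdots\to u_2$ (length $n-2$, weight $n/2-2$) with $u_2\to v_0$ (black) and $v_0\to u_1$ (gray) yields a cycle through $u_1$ of length $n$ and weight $n/2-1$, hence slope $\tfrac12-\tfrac1n=\lambda(2)$ --- exactly the case $i=0$ of the (corrected) formula. So $s_0$ is not a special case at all; the parameters $s_0,s_2,\ldots,s_{n-2}$ correspond cleanly to $\lambda(2),\lambda(4),\ldots,\lambda(n)$, and the dimension count closes as in the odd case. By contrast, your alternative ``extension-of-isocrystals'' resolution would actively mislead you here: if one forgets the $s_0 v_0$ correction to $\til V^{-1}u_2$, the submodule $\til M'=W\{u_i,v_i\}_{1\le i\le n-1}$ looks $F$- and $V$-stable, the isocrystal splits off a constant $N$-summand, and one wrongly concludes that $s_0$ has no effect on the Newton polygon --- contradicting the fact that $\stk M^\sigma$ has codimension $n/2$, not $n/2-1$, in $\stk M^{/z}$.
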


\section{Hecke orbits for the supersingular locus}

\begin{lemma}
\label{lemheckesupersingular}
Let $\level^p\subset G(\aff^p_f)$ be a compact open subgroup.
The $\U(\aff^p_f)$-Hecke operators act transitively on $\Pi_0(\stk
M_{\level^p}^\sigma)$.
\end{lemma}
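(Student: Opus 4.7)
The plan is to combine the Rapoport--Zink uniformization of the basic (supersingular) Newton stratum with strong approximation for the unitary group $\U$, bootstrapping from a known $p$-adic transitivity on components of the corresponding Rapoport--Zink space.

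First I would fix a geometric base point $z_0=(X_0,\iota_0,\lambda_0)\in \stk M^\sigma(\bar\ff_p)$ and let $I/\rat$ be the algebraic group whose $R$-points are the $(\calo_L,\lambda_0)$-compatible self quasi-isogenies of $z_0$. Because the supersingular locus is the basic Newton stratum, every other point of $\stk M^\sigma_{\level^p}(\bar\ff_p)$ is isogenous to $z_0$, and one has the Rapoport--Zink uniformization
\[
\stk M^\sigma_{\level^p}(\bar\ff_p) \iso I(\rat) \,\big\backslash\, \bigl( \stk N(\bar\ff_p) \times \G(\aff^p_f)/\level^p \bigr),
\]
where $\stk N$ is the Rapoport--Zink space attached to $(X_0,\iota_0,\lambda_0)[p^\infty]$. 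This uniformization is equivariant for the natural right action of $\U(\aff^p_f)$ on the second factor and for the action of $J:=J_b(\rat_p)=I(\rat_p)$ on $\stk N$, and it respects the decomposition into irreducible components.

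Next I would invoke two inputs. First, by the explicit analysis of the Rapoport--Zink space for $\gu(1,n-1)$ at an inert prime (Vollaard, and Vollaard--Wedhorn), the group $J(\rat_p)$ acts transitively on $\Pi_0(\stk N)$. Second, because $n\ge 3$, the group $\U$ is semisimple and simply connected, and it is non-compact at $\infty$ (its real points have signature $(1,n-1)$); consequently strong approximation holds at $p$, so $\U(\rat)\cdot \U(\rat_p)$ is dense in $\U(\aff_f)$.

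Finally, to conclude, I would take two components $W, W'$ of $\stk M^\sigma_{\level^p}$, lift them to pairs $(x,g),(x',g')\in \stk N(\bar\ff_p)\times \G(\aff^p_f)/\level^p$, and use $J(\rat_p)$-transitivity to find $j\in J$ carrying the component of $x$ to that of $x'$. Strong approximation then lets me write $j$ as the product of an element of $I(\rat)$ and an element of $\U(\aff^p_f)$, up to error absorbed in the compact open $\level^p$; quotienting by $I(\rat)$ leaves precisely a $\U(\aff^p_f)$-Hecke operator transporting $W$ to $W'$. The main obstacle I foresee is the bookkeeping required to realize the component-permuting action by \emph{unitary} (rather than similitude) Hecke operators and to handle the similitude character cleanly; in essence, this is what \cite[Thm.~4.6]{yumasspel} packages, so in the write-up I would likely cite that result rather than re-do the strong approximation argument from scratch, and descend statements over $\bar\ff_p$ to an arbitrary algebraically closed $k$ in the usual way.
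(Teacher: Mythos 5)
Your proposal and the paper both ultimately rest on Yu's theorem that for a basic point the prime-to-$p$ Hecke orbit equals the central leaf, but you reach it by unfolding the machinery (Rapoport--Zink uniformization, Vollaard--Wedhorn transitivity of $J_b(\rat_p)$ on $\Pi_0$ of the Rapoport--Zink space, strong approximation) rather than citing it directly. You concede at the end that you would cite \cite[Thm.~4.6]{yumasspel} in a write-up, which is exactly what the paper does; the paper then needs only one more observation, namely that the central leaf of a supersingular point is (the generic points of) $\stk M^{\sigma\circ}$, which is dense in $\stk M^\sigma$, from which transitivity on components is immediate. Your route is less economical but is, in essence, the proof underlying Yu's theorem, so it offers a more self-contained picture of why the result is true.

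Two imprecisions in the behind-the-scenes sketch are worth flagging. First, $\U(V)$ is reductive but \emph{not} semisimple, and it is not simply connected --- over $\bar\rat$ it is a form of $\mathrm{GL}_n$. Strong approximation in the usual form applies to its simply connected derived group $\mathrm{SU}(V)$; bridging from $\mathrm{SU}$ to $\U$ is exactly the ``similitude-character bookkeeping'' you flag as an obstacle, and it is what \cite[Rem.~4.7(3)]{yumasspel} is for. Second, for a basic point the uniformizing group $I$ has compact real points (mod center), so you cannot run strong approximation at the archimedean place as written; one instead uses that $I^{\mathrm{der}}(\rat_p)=J_b^{\mathrm{der}}(\rat_p)$ is noncompact and applies strong approximation for $I^{\mathrm{der}}$ away from $p$. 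Neither issue is fatal --- both are absorbed by the citation you already intend to make --- but as stated your strong approximation step would not go through.
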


\begin{proof}
  Let $(X,\iota,\lambda) = \bar\eta$ be a geometric generic point of
  $\stk M_{\level^p}^\sigma$.  The central leaf $\calc([\bar\eta])$, which in a
  general PEL Shimura variety context parametrizes those
  $(Y,\jmath,\mu)$ with $(Y[p^\infty], \jmath[p^\infty],\mu[p^\infty])
  \iso (X[p^\infty],\iota[p^\infty],\lambda[p^\infty])$, in this case
  coincides with (the union of a choice of geometric point over each
  generic point of) $\stk M^{\sigma\circ}$.  There is an a priori
  inclusion $\calh^p(\bar\eta) \subseteq \calc([\bar\eta])$.  Since
  $\bar\eta$ is basic and $\G$, the reductive group defining $\stk M$,
  has simply connected derived group, the prime-to-$p$ Hecke orbit of
  $\bar\eta$ coincides with the central leaf $\calc([X[p^\infty],
  \iota[p^\infty], \lambda[p^\infty]])$ \cite[Thm.\ 4.6(1) and Rem.\
  4.7(3)]{yumasspel}.
\end{proof}

\section{Irreducibility of Newton strata}

\begin{proof}[Proof of Theorem \ref{thmain}]
Chai and Oort identify nine steps in their proof of
\cite[Thm.\ 3.1]{chaioort11}, which is the analogue for $\stk A_g$ of
Theorem \ref{thmain}.  We proceed here in a similar fashion.  Fix an
open compact subgroup $\level^p \subset \G(\aff^p_f)$; it suffices to
prove that $\stk M_{\level^p}^\xi$is irreducible.

\begin{description}
\item[Steps 1-6] By Lemma \ref{lemsigmainboundary}, there is a
  well-defined map of sets
\begin{diagram}
\Pi_0(\stk M^\sigma) & \rto  & \Pi_0(\stk M^\xi).
\end{diagram}
It is surjective, by Lemma \ref{lemxidegenerates}.  From it, we deduce
the existence of a surjective
\begin{diagram}
\Pi_0(\stk M_{\level^p}^\sigma) & \rto & \Pi_0(\stk M_{\level^p}),
\end{diagram}
visibily $\U(\aff^p_f)$-equivariant.

\item[Steps 7-8] By Lemma \ref{lemheckesupersingular}, the action of
  $\U(\aff^p_f)$ on  $\Pi_0(\stk M_{\level^p}^\sigma)$ is transitive.

\item[Step 9] Taken together, this shows that $\U(\aff^p_f)$ acts
  transitively on $\Pi_0(\stk M^\zeta_{\level^p})$.  By \cite[Thm.\
  1.4]{kasprowitz}, which is the PEL analogue of \cite{chailadic},
  $\stk M_{\level^p}^\xi$ is connected.  Since $\stk M_{\level^p}^\xi$ is also smooth
  (Lemma \ref{lemxismooth}), it is irreducible.
\end{description}
\end{proof}

\bibliographystyle{hamsplain}
\bibliography{jda}

\def\cprime{$'$}
\providecommand{\bysame}{\leavevmode\hbox to3em{\hrulefill}\thinspace}
\providecommand{\href}[2]{#2}
\begin{thebibliography}{10}

\bibitem{achterrmjm}
Jeffrey~D. Achter, \emph{Hilbert-{S}iegel moduli spaces in positive
  characteristic}, Rocky Mountain J. Math. \textbf{33} (2003), no.~1, 1--25.

\bibitem{bultelwedhorn}
Oliver B{\"u}ltel and Torsten Wedhorn, \emph{Congruence relations for {S}himura
  varieties associated to some unitary groups}, J. Inst. Math. Jussieu
  \textbf{5} (2006), no.~2, 229--261.

\bibitem{chailadic}
Ching-Li Chai, \emph{Monodromy of {H}ecke-invariant subvarieties}, Pure Appl.
  Math. Q. \textbf{1} (2005), no.~2, 291--303.

\bibitem{chaioort11}
Ching-Li Chai and Frans Oort, \emph{Monodromy and irreducibility of leaves},
  Ann. of Math. (2) \textbf{173} (2011), no.~3, 1359--1396,
  {10.4007/annals.2011.173.3.3}.

\bibitem{dejongoort}
A.~J. de~Jong and F.~Oort, \emph{Purity of the stratification by {N}ewton
  polygons}, J. Amer. Math. Soc. \textbf{13} (2000), no.~1, 209--241.

\bibitem{kasprowitz}
Ralf Kasprowitz, \emph{Monodromy of subvarieties of {P}{E}{L}-{S}himura
  varieties}, arXiv preprint arXiv:1209.5891 (2012).

\bibitem{kottwitz}
Robert~E. Kottwitz, \emph{Points on some {S}himura varieties over finite
  fields}, J. Amer. Math. Soc. \textbf{5} (1992), no.~2, 373--444.

\bibitem{lanbook}
Kai-Wen Lan, \emph{Arithmetic compactifications of {P}{E}{L}-type {S}himura
  varieties}, London Math. Soc. Monographs, no.~36, Princeton University Press,
  Princeton, NJ, 2013.

\bibitem{nicolevasiuwedhorn}
Marc-Hubert Nicole, Adrian Vasiu, and Torsten Wedhorn, \emph{Purity of level
  {$m$} stratifications}, Ann. Sci. \'Ec. Norm. Sup\'er. (4) \textbf{43}
  (2010), no.~6, 925--955.

\bibitem{normalg}
Peter Norman, \emph{An algorithm for computing local moduli of abelian
  varieties}, Ann. Math. (2) \textbf{101} (1975), 499--509.

\bibitem{ozmanpriesweir}
Ekin Ozman, Rachel Pries, and Colin Weir, \emph{The $p$-rank of cyclic covers
  of the projective line}, 2014, in preparation.

\bibitem{yumasspel}
Chia-Fu Yu, \emph{Simple mass formulas on {S}himura varieties of {PEL}-type},
  Forum Math. \textbf{22} (2010), no.~3, 565--582, {10.1515/FORUM.2010.030}.

\bibitem{zinkdisp}
T.~Zink, \emph{The display of a formal $p$-divisible group}, Ast\'erisque
  (2002), no.~278.

\end{thebibliography}

\end{document}